\def\ram{{\rm ram}}
\def\branch{{\rm branch}}
\def\unbranch{{\rm u}}
\def\rram{{\rm r}}
\def\bbranch{{\rm b}}
\def\exc{{\rm ex}}
\def\inj{{\rm inj}}
\def\surj{{\rm surj}}
\def\sriso{\stackrel{\sim}{\rightarrow}}
\newcommand{\sra}[1]{\stackrel{#1}{\rightarrow}}
\def\tfae{(*)}
\def\aff{{\mathbb A}}
\def\ff{{\mathbb F}}
\def\proj{{\mathbb P}}
\def\rat{{\mathbb Q}}
\def\calb{{\mathcal B}}
\def\calo{{\mathcal O}}
\def\cals{{\mathcal S}}
\def\calx{{\mathcal X}}
\def\cross{\times}
\def\ra{\rightarrow}
\def\inject{\hookrightarrow}
\def\del{\partial}
\def\iso{\cong}
\newcommand{\ang}[1]{{{\langle #1 \rangle}}}
\renewcommand{\bar}[1]{{\overline{#1}}}
\newcommand{\abs}[1]{{\left|#1\right|}}
\newcommand{\st}[1]{\{#1\}}
\newcommand{\til}[1]{{\widetilde{#1}}}
\newcommand{\rest}[1]{|_{#1}}
\DeclareMathOperator{\aut}{Aut}
\DeclareMathOperator{\gal}{Gal}
\DeclareMathOperator{\spec}{Spec}
\def\inv{^{-1}}
\newenvironment{alphabetize}{\begin{enumerate}

}{\end{enumerate}}
\numberwithin{equation}{section}
\theoremstyle{plain}
\newtheorem{theorem}{Theorem}[section]
\newtheorem{condition}[theorem]{Condition}
\newtheorem{lemma}[theorem]{Lemma}
\newtheorem{proposition}[theorem]{Proposition}
\newtheorem{corollary}[theorem]{Corollary}
\newtheorem*{maintheorem}{Theorem \ref{thnrdc}}
\newtheorem*{surftheorem}{Corollary \ref{corsurf}}
\theoremstyle{remark}
\newtheorem{remark}[theorem]{Remark}
\begin{document}
\title{Exceptional covers of surfaces}
\author{Jeffrey D. Achter}
\email{j.achter@colostate.edu}
\address{Department of Mathematics\\Colorado State University\\Fort
  Collins, CO 80523}
\subjclass[2000]{11G25}

\begin{abstract}
Consider a finite morphism $f:X \ra Y$ of smooth, projective varieties
over a finite field $\ff$.  Suppose $X$ is the vanishing locus in
$\proj^N$ of $r$ forms of degree at most $d$.  We show that there is a
constant $C$ depending only on $(N,r,d)$ and $\deg(f)$ such that if
$\abs{\ff}>C$, then $f(\ff): X(\ff) \ra Y(\ff)$ is injective if and
only if it is surjective.
\end{abstract}
\maketitle


\section{Introduction}

Consider a finite, generically \'etale morphism $f:X \ra Y$ between
smooth, projective varieties over a finite field $\ff$ of
characteristic $p$.  The cover $f$ is
called exceptional if the only geometrically irreducible component of
$X\cross_YX$ which is defined over $\ff$ is the diagonal.  Exceptional
covers have the following intriguing property: the induced map
$f(\ff): X(\ff) \ra Y(\ff)$ on $\ff$-points is bijective.  This
theorem, due to Lenstra, is proved in \cite{gtz}; we defer to that
article for the history of this circle of ideas.

In \cite{gtz}, Guralnick, Tucker and Zieve prove a partial converse
for projective curves. Specifically, they show that for fixed genus
$g=g(X)$ and degree $\deg(f)$, there exists an effective constant $C$
such that the following holds: 
if 
$\ff_q/\ff$ is an extension with $q>C$, and if $f(\ff_q)$ is injective,
then $f$ is exceptional.  (Note that this implies that $f$ is
bijective.)   They prove something like this in higher dimension (see
Remark \ref{remgtz} below), except that the constant $C$ is allowed to
depend on $X$, $Y$ and $f$.  They conjecture \cite[5.5]{gtz} that $C$
need only depend on $\deg(f)$ and the topology of $X$.

The calculation of $C$ relies on understanding the topology of the
cover $f$.
Indeed, if $Z$ is a nondiagonal component of $X\cross_YX$, and if
$f(\ff_q)$ is injective, then every point of $Z(\ff_q)$ is
actually a ramification point.  The dimension of $Z$ is greater
than that of the ramification locus $\ram(f)$.  Weil-type estimates show
that if $\abs{Z(\ff_q)} \le \abs{\ram(f)(\ff_q)}$, then $q$ must be
small relative to the Betti numbers of $Z$ and $\ram(f)$.
In the special case of curves, Guralnick, Tucker 
and Zieve obtain effective bounds for these Betti numbers, and thus a
bound for the constant $C$.  

Consider the following condition on a triple $(X/\ff, C, n)$
consisting of a smooth, projective, geometrically irreducible variety $X$, a
constant $C$, and a natural number $n \ge 2$:

\begin{condition}
\label{thecond}
Let $Y/\ff$ be a smooth projective geometrically irreducible variety, and let $f:X \ra Y$ be a
finite, tamely ramified, generically \'etale morphism of degree $n$.
Then
$(f:X \ra Y/\ff,C)$ satisfies \tfae:
\begin{quotation}
{\bf \tfae}\ \ \parbox{5in}
{
 If 
$\ff_q/\ff$ is a finite extension with $q>C$, then the following are
equivalent:
\begin{alphabetize}
\item $f(\ff_q):X(\ff_q) \ra Y(\ff_q)$ is injective;
\item $f(\ff_q):X(\ff_q) \ra Y(\ff_q)$ is surjective;
\item $f_{\ff_q}:X_{\ff_q} \ra Y_{\ff_q}$ is exceptional.
\end{alphabetize}
}
\end{quotation}
\end{condition}

The main purpose of this note is to explain the following
result.  Say that a projective variety $X$ is of type $(N,r,d)$ if $X$
is isomorphic to the vanishing in $\proj^N$ of at most $r$ homogeneous
forms, each of which has degree at most $d$. 

\begin{maintheorem}
Given data $(N,r,d)$ and $n$, there exists a constant $C$ so that the
following holds.  If $X$ is a smooth, projective geometrically
irreducible of type $(N,r,d)$, then $(X/\ff, C, n)$ satisfies
\eqref{thecond}.
\end{maintheorem}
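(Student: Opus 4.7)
Lenstra's theorem gives $(c) \Rightarrow$ bijectivity on $\ff_q$-points, and hence $(a)$ and $(b)$, with no restriction on $q$. The content of the theorem is the two converses, so the plan is to show: if $f$ is not exceptional, then for $q$ larger than a constant depending only on $(N, r, d, n)$, both $(a)$ and $(b)$ fail. Assume $X \times_Y X$ has a geometrically irreducible component $Z$ defined over $\ff$ with $Z \neq \Delta$; since both projections $Z \to X$ are finite, $\dim Z = \dim X$ while $\dim(Z \cap \Delta) < \dim X$. Any $\ff_q$-point of $Z \setminus \Delta$ refutes injectivity, so $(a)$ fails as soon as $\abs{Z(\ff_q)} > \abs{(Z \cap \Delta)(\ff_q)}$.

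For $(b)$, use the identity
\[
\abs{X \times_Y X(\ff_q)} - \abs{X(\ff_q)} = \sum_{y \in Y(\ff_q)} \abs{f^{-1}(y)(\ff_q)} \bigl(\abs{f^{-1}(y)(\ff_q)} - 1\bigr).
\]
Let $S := \{y \in Y(\ff_q) : \abs{f^{-1}(y)(\ff_q)} \ge 2\}$. The left side is at least $\abs{Z(\ff_q)} - \abs{(Z \cap \Delta)(\ff_q)}$ (from the two components $\Delta$ and $Z$ of $X \times_Y X$), and each nonzero summand on the right is at most $n(n-1)$; hence $\abs{S} \ge \bigl(\abs{Z(\ff_q)} - \abs{(Z \cap \Delta)(\ff_q)}\bigr)/\bigl(n(n-1)\bigr)$, which is of order $q^{\dim X}$. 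If $f(\ff_q)$ were surjective, then each $y \in Y(\ff_q)$ contributes at least $1$ to $\abs{X(\ff_q)}$, giving $\abs{S} \le \abs{X(\ff_q)} - \abs{Y(\ff_q)} = O(q^{\dim X - 1/2})$ by Lang--Weil. For large $q$ these bounds on $\abs{S}$ conflict. Both failures thus reduce to Deligne--Weil estimates $\abs{V(\ff_q)} = q^{\dim V} + O\bigl(B(V)\, q^{\dim V - 1/2}\bigr)$ for $V \in \{X, Y, Z, Z \cap \Delta\}$, where $B(V)$ is polynomial in the sum of $\ell$-adic Betti numbers of $V$. It remains to bound each $B(V)$ uniformly in terms of $(N, r, d, n)$.

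\textbf{Main obstacle.} For $X$ of type $(N, r, d)$, uniform Betti bounds in terms of type data are classical (Bombieri, Katz). For a component $Z \subset X \times_Y X$: embed $X \times X \inject \proj^N \times \proj^N \inject \proj^{(N+1)^2 - 1}$ by Segre; then $X \times X$ is cut out by forms of degree bounded in $(N, d)$, and the finiteness of both projections $Z \to X$ of degree at most $n$ bounds $\deg Z$ in this Segre embedding in terms of $(N, r, d, n)$, whence a theorem of Katz bounds $B(Z)$. The genuine difficulty I anticipate is bounding $B(Y)$, since $Y$ enters the problem only through $\deg f = n$ and cannot be controlled directly by type data. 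The plan is to realize $Y$ as a subvariety of a parameter space of bounded complexity, for instance by embedding $Y \inject \mathrm{Hilb}^n(X)$ via $y \mapsto f^{-1}(y)$, or by the analogous map into a symmetric product of $X$; the complexity of such a parameter space is controlled by $(N, r, d, n)$, transferring a uniform Betti bound to $Y$. Once all $B(V)$ are bounded uniformly, the Deligne--Weil comparison yields the constant $C$.
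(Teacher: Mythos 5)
Your high-level strategy differs from the paper's actual proof. The paper passes to the Galois closure $\til X \ra Y$, applies the twisting construction of Guralnick--Tucker--Zieve to the \'etale locus, and uses Kowalski's bound (Lemma \ref{lemgaloisbetti}) on the Betti numbers of tame Galois covers to guarantee that each relevant twist has an $\ff_q$-point; the exceptionality criterion is then read off from fixed-point counts in the Galois group. You instead work directly with a nondiagonal component $Z \subset X \cross_Y X$ and derive a contradiction from Lang--Weil estimates, which is in fact much closer to the heuristic sketched in the paper's introduction than to its proof. Conceptually your route is simpler, avoids the group theory entirely, and (if completed) would not even use the tameness hypothesis. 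Your surjectivity argument via the counting identity and $\abs{S} \le \abs{X(\ff_q)} - \abs{Y(\ff_q)}$ is correct and a nice observation.

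However, there is a genuine gap precisely where you flag the ``main obstacle,'' and the proposed remedy does not close it. You claim that embedding $Y \inject \mathrm{Hilb}^n(X)$ or into a symmetric product, whose complexity is bounded in $(N,r,d,n)$, ``transfers a uniform Betti bound to $Y$.'' It does not: a closed subvariety of a variety of bounded type can have arbitrarily large degree and hence arbitrarily large Betti numbers (already a plane curve of high degree illustrates this). To control $B(Y)$ you must bound the \emph{type} of $Y$---the degree and number of its defining equations---not merely exhibit it as a subscheme of something controlled. The paper does exactly this in Lemma \ref{lemelimination}: since $f$ is finite of degree $n$, $f$ is given locally by polynomials of degree $\le n$ (B\'ezout), and elimination theory (classical or Gr\"obner) bounds the equations defining the image. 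Your symmetric-product map $X \ra \mathrm{Sym}^n X$ could be combined with the same elimination-theoretic step, but the elimination step is the missing content. A second, related gap: your claim that finiteness of both projections $Z \ra X$ of degree $\le n$ bounds $\deg Z$ in the Segre embedding controls only the ``extreme'' multidegrees $\deg_{(\dim X,0)}$ and $\deg_{(0,\dim X)}$; the mixed multidegrees are not obviously bounded by this data alone. The clean route is again via Lemma \ref{lemelimination}: once $Y$ has bounded type, $X\cross_Y X = (f\cross f)\inv(\Delta_Y)$ has bounded type, and then so do all of its components and their intersections with $\Delta$, after which Katz-type bounds give the Betti numbers you need. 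In short, your skeleton is sound and genuinely different from the paper's, but the load-bearing step---a uniform bound on the type of $Y$, and hence of $X\cross_Y X$---needs the elimination-theory lemma, not a Hilbert-scheme embedding.
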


The type $(N,r,d)$ is not intrinsic to $X$, but is rather an attribute
of $X$ along with a chosen embedding.  In fact,
\cite[Conjecture 5.5]{gtz}
asks for a result similar to \ref{thnrdc}, but which depends only
on the $\ell$-adic Betti numbers of the source variety.  Here is a
partial answer for surfaces:

\begin{surftheorem}
Given nonnegative integers $b_1$, $b_2$ and $b_3$ and a natural number
$n$, there exists an
effective constant $C$ so that the following holds.  Let $X$ be a
smooth, projective, geometrically irreducible surface over a finite
field $\ff$.   Suppose
that $X$ is of general type and admits a flat lifting to $W(\ff)/p^2$, and that $\dim
H^i(X,\rat_\ell) = b_i$ for $1 \le i \le 3$.  Then $(X/\ff,C,n)$ satisfies
\eqref{thecond}.
\end{surftheorem}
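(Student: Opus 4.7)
The plan is to reduce Corollary \ref{corsurf} to the main Theorem \ref{thnrdc} by extracting, from the hypotheses on Betti numbers, general type, and lifting to $W(\ff)/p^2$, a bound on an embedding type $(N,r,d)$ of $X$; once such a bound is in hand, Theorem \ref{thnrdc} (applied with the given $n$) supplies the desired constant $C$.

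The first step converts Betti data into numerical data. The flat lifting of $X$ to $W(\ff)/p^2$ makes the Deligne--Illusie theorem applicable (after absorbing the finitely many small characteristics into $C$), so the Hodge-to-de Rham spectral sequence of $X$ degenerates and $\sum_{p+q=i}h^{p,q}(X)=b_i$ for each $i$. In particular $h^{0,1}\leq b_1$ and $h^{0,2}\leq b_2$, so $\chi(\calo_X)=1-h^{0,1}+h^{0,2}$ has absolute value bounded by $1+b_1+b_2$. Noether's formula $12\chi(\calo_X)=K_X^2+c_2(X)$, together with $c_2(X)=2-2b_1+b_2$ (from Poincar\'e duality $b_3=b_1$ and $b_0=b_4=1$), then bounds $K_X^2$ in terms of the $b_i$ alone.

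The second step produces a projective embedding whose complexity is controlled by the $b_i$. Under the $W(\ff)/p^2$-lifting hypothesis one has a Kodaira-type vanishing theorem, and an effective Bombieri--Reider-style argument yields an integer $m$, bounded in terms of $K_X^2$, such that a linear system built from $|mK_X|$ gives a closed embedding $X\inject\proj^N$ with $N=h^0(X,mK_X)-1$ controlled by Riemann--Roch and with image of degree at most $m^2K_X^2$. Standard Castelnuovo--Mumford regularity estimates for closed subschemes of $\proj^N$ of bounded degree then bound the number $r$ of defining equations and their maximum degree $d$. Hence $X$ is of type $(N,r,d)$ with $(N,r,d)$ depending only on $(b_1,b_2,b_3)$, and Theorem \ref{thnrdc} concludes.

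The main obstacle is that $|mK_X|$ need not embed $X$ itself: it contracts $(-1)$- and $(-2)$-curves when present. One must either bound the number of such exceptional curves via $b_2$ and the Picard number (both controlled by the given data), reduce to the minimal/canonical model, and transport an embedding back through a Betti-bounded sequence of blow-ups, or replace the canonical bundle by a Betti-bounded ample line bundle furnished by an effective Matsusaka-type theorem in positive characteristic. Either route will demand some care, but both remain inside the circle of ideas (bounded families, regularity estimates) already invoked for Theorem \ref{thnrdc}.
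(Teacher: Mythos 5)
Your proposal follows the paper's own route: Deligne--Illusie converts the Betti data plus the $W(\ff)/p^2$-lift into Hodge numbers, yielding bounds on $\chi(\calo_X)$ and $K_X^2$; the paper then uses $|5K_X|$ (very ample on the canonical model, with $N=10K_X^2+\chi(\calo_X)-1$ and explicit Hilbert polynomial) and resolves your ``main obstacle'' exactly as you suggest, namely by bounding the number of $(-2)$-curves (via Mumford) and $(-1)$-curves (via $h^{1,1}$) and transporting the type bound through the corresponding blow-ups. The paper isolates all of this in Lemma \ref{lemhodge}; your sketch matches it step for step, including correctly flagging the canonical-model-vs-$X$ issue as the crux.
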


Results such as Theorem \ref{thnrdc} have long been known in the
special case where $X = Y = \proj^n$ \cite{fried74}.  For arbitrary varieties, 
\cite[Prop.\ 5.6]{gtz} shows the existence of a constant $C$ which depends on $f:X \ra
Y$ such that $(f:X\ra Y/\ff,C)$ satisfies \tfae.  The contribution of
the present note is to prove a more uniform version of these results.

The final section gives a new supply (Proposition \ref{propex}) of
examples of exceptional covers, as well as examples of covers which
are injective or surjective on $\ff$-points, but not bijective.  In
contrast to previously published examples, which tend to focus on
curves, projective spaces or abelian varieties, these covers involve
varieties of arbitrary dimension and arbitrarily intricate topology.

I  thank Guralnick for a helpful discussion of \cite[Prop. 5.6]{gtz};
see Remark \ref{remgtz}.

\section{Exceptional covers of polarized varieties}

Say that a projective variety $X$ admits a polarization of type $(N,r,d)$,
or simply that $X$ {\em has type} $(N,r,d)$, if there exists an embedding
$X\inject \proj^N$ such that $X$ can be expressed as the vanishing
locus of at most $r$ homogeneous forms, each of which has degree at
most $d$.  
Similarly, say that an affine variety $U$ {\em has type}
$(N,r,d)$ if there exists an embedding $U\inject \aff^N$ such that $U$
is the vanishing locus of at most $r$ polynomials, each of which has
degree at most $d$. 

Suppose $X$ is projective of type $(N,r,d)$, and that $X\inject \proj^N$ realizes this type.  If $H\subset\proj^N$ is a hyperplane, 
then $X\cap H$ is projective of type $(N-1,r,d)$, while $X-(X\cap H)$
is affine of type $(N,r,d)$.   For the
moment, we work over an arbitrary and suppressed field $k$.

\begin{lemma} 
\label{lemelimination}
Gven data $(N,r,d)$ and $n$ there exist effective constants $(\til N, \til r,
\til d)$ such that the following holds.  If $X$ is a geometrically irreducible
projective variety of type $(N,r,d)$,  
and if $f:X \ra Y$ is a finite map of degree $n$, then $Y$ is
projective of type $(\til N, \til r, \til d)$.
\end{lemma}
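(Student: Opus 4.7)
The plan is to construct a closed embedding $Y \inject \proj^{\til N}$ whose image has effectively bounded degree, and then invoke a standard bound, in terms of degree and embedding dimension, on the number and degree of defining equations.

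First, since $f:X\ra Y$ is finite of degree $n$ and $X\subset\proj^N$, every scheme-theoretic fiber $f\inv(y)$ is a length-$n$ closed subscheme of $\proj^N$. Sending $y$ to $f\inv(y)$ gives a morphism $\phi:Y\ra\mathrm{Hilb}^n(\proj^N)$. Because $f$ is finite, distinct points of $Y$ correspond to disjoint subschemes of $X$, so $\phi$ is a closed immersion. Grothendieck's construction of $\mathrm{Hilb}^n(\proj^N)$ (or, equivalently, the symmetric product $\mathrm{Sym}^n\proj^N$ built from symmetric functions in the coordinates of $(\proj^N)^n$) yields an effective closed embedding $\mathrm{Hilb}^n(\proj^N)\inject\proj^M$ with $M=M(N,n)$. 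Composing produces $\phi:Y\inject\proj^M$ with $M$ effective in $(N,n)$, and one sets $\til N:=M$.

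Second, I would bound $\deg\phi(Y)$ in $\proj^M$ in terms of $(N,r,d,n)$. B\'ezout applied to the at most $r$ defining forms of degree at most $d$ for $X\subset\proj^N$ bounds $\deg X$ by a function of $(N,r,d)$; the degree of $\phi(Y)$ then inherits an effective bound from $\deg X$ and $n$ by tracing through the explicit construction above.

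Third, for an arbitrary closed subvariety of $\proj^M$ of dimension $e$ and degree $\Delta$, there is an effective bound---a function of $M,\Delta,e$---on the number and maximum degree of generators of its homogeneous ideal (for instance via Mumford's bound on Castelnuovo--Mumford regularity). Applying this to $\phi(Y)$, whose dimension equals $\dim X\le N$ and whose degree is bounded by the preceding step, yields $(\til r,\til d)$ as effective functions of $(N,r,d,n)$.

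The main obstacle is tracking effectiveness through the first two steps---in particular, producing an explicit enough embedding $\mathrm{Hilb}^n(\proj^N)\inject\proj^M$ with an honest effective $M=M(N,n)$, and following the degree bound through it. The third step is essentially bookkeeping once a degree bound is in hand.
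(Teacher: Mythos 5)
Your route is genuinely different from the paper's. The paper embeds $Y$ into $\proj^M$ with $M = 2\dim Y+1$ by generic projection, writes $f$ in affine coordinates as a tuple of polynomials of degree bounded by B\'ezout, and then invokes effective elimination theory (classical or Gr\"obner) to bound the number and degree of equations cutting out $\overline{f(U)}$. You instead propose to embed $Y$ into a Hilbert scheme and then use a regularity bound. Both routes would, if carried through, reduce the lemma to a known effective bound; the paper's elimination argument is more elementary and sidesteps the need to discuss closed immersions or regularity at all.

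However, there is a genuine gap in your Step 1. The assignment $y \mapsto f^{-1}(y)$ defines a morphism $Y \to \mathrm{Hilb}^n(\proj^N)$ only when $f$ is \emph{flat}, so that the fiber length is constant equal to $n$. The lemma assumes only that $f$ is finite of degree $n$ between projective varieties, with $X$ geometrically irreducible; neither $X$ nor $Y$ is assumed normal or Cohen--Macaulay, so the fiber length of $f$ can jump above $n$ at special points, and there is no classifying morphism to $\mathrm{Hilb}^n$ at all. (Passing to a Chow/symmetric-power target does not help, for the same reason: the fiber cycles would not all have degree $n$.) You would need either to add a flatness hypothesis---which is not available here---or to first replace $f$ by a flat modification in a way that provably preserves effective bounds, which is itself nontrivial. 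Separately, even granting flatness, the claim that injectivity on points makes $\phi$ a closed immersion is too quick: one also needs injectivity on tangent vectors, and more generally that $\phi$ is proper and unramified; this requires an argument. These two issues, together with the effectivity bookkeeping you yourself flag for the Hilbert-scheme embedding, mean the proposal is not yet a proof; the paper's elimination-theoretic route avoids all of them.
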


\begin{proof}

Embed $Y \inject \proj^M$, where $M = 2\dim Y+1$.  On an open affine
subset $U\subseteq X$, $f$ is represented by a map $(f_0, \cdots,
f_M)$, with each $f_j \in k[\aff^N]$.
By
B\'ezout's theorem, each $f_j$ has degree at most $n$.
Elimination theory, either in its classical form 
or its Gr\"obner-theoretic incarnation, 
yields an effective upper bound on the number
and degree of the equations necessary to define the Zariski closure of
$f(U)$ in $\aff^M$.
\end{proof}

\begin{lemma}
\label{lembranch}
Given data $(N,r,d)$ and $n$ there are effective constants $(N_\rram, r_\rram, d_\rram)$
and $(N_\bbranch, r_\bbranch, d_\bbranch)$ such that the following
holds.  If $X$ is a geometrically irreducible projective variety of type $(N,r,d)$ and if $f:X \ra Y$ is a
finite map of projective varieties of degree $n$, then the
ramification locus $\ram(f)\subset X$ is of type $(N_\rram,
r_\rram, d_\rram)$, and the branch locus $\branch(f)\subset Y$ is
of type $(N_\bbranch, r_\bbranch, d_\bbranch)$.
\end{lemma}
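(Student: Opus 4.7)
My plan is to reduce the ramification to a Jacobian computation on affine charts, then homogenize and apply elimination theory. By Lemma \ref{lemelimination}, I already have $Y$ of bounded type $(\til N, \til r, \til d)$. I will cover $X$ by the $N+1$ standard affine opens $U_i = X \setminus (X\cap H_i)$, each of type $(N,r,d)$ as noted just before the lemma. On each $U_i$, the morphism $f$ is represented by $M+1$ polynomial coordinates of degree at most $n$, by the same B\'ezout argument used in the proof of Lemma \ref{lemelimination}.

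On $U_i$, the ramification locus $\ram(f)$ is the support of $\Omega^1_{X/Y}$. This sheaf has an explicit presentation whose matrix entries are partial derivatives of the defining equations of $X \inject \aff^N$, of $Y \inject \aff^M$, and of the components of $f$. Hence $\ram(f) \cap U_i$ is cut out inside $X$ by the maximal minors of this Jacobian-type matrix; each minor has degree bounded in terms of $(N,r,d,n)$. Combined with the equations of $X$ and the equation for the hyperplane at infinity being inverted, this presents $\ram(f) \cap U_i$ as an affine variety of bounded type.

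Homogenizing each of these local descriptions and combining over the $N+1$ charts yields a projective description of $\ram(f) \subset \proj^N$, with the number and degree of equations bounded effectively in $(N,r,d,n)$; this produces the constants $(N_\rram, r_\rram, d_\rram)$. For the branch locus, I apply the elimination argument of Lemma \ref{lemelimination} to the finite morphism $\ram(f) \to Y$, obtaining $(N_\bbranch, r_\bbranch, d_\bbranch)$ from the bounded-type description of the source.

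The main obstacle will be uniformly bookkeeping the degree bounds through the Jacobian/Fitting ideal step, the homogenization, and the final elimination, so that the constants depend only on $(N,r,d,n)$ and not on $X$ or $f$. A minor technical point is that $\ram(f)$ need not be geometrically irreducible, whereas Lemma \ref{lemelimination} is phrased with an irreducibility hypothesis; but that hypothesis is used only incidentally in its proof, so the elimination step applies directly to $\ram(f) \to Y$ (alternatively, one can decompose $\ram(f)$ into irreducible components with effectively bounded complexity by standard bounds for primary decomposition).
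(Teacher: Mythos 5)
Your approach matches the paper's: both represent $f$ locally by polynomial coordinates of degree at most $n$ (via B\'ezout), describe $\ram(f)$ as the degeneracy locus of a Jacobian-type matrix whose minors have effectively bounded degree, and then apply the elimination argument of Lemma \ref{lemelimination} to the map $\ram(f) \to \branch(f)$ to bound the type of the branch locus. Your extra care---covering by $N+1$ affine charts, phrasing ramification via $\Omega^1_{X/Y}$, and flagging the irreducibility hypothesis in Lemma \ref{lemelimination}---are technical refinements the paper leaves implicit, not a different strategy.
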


\begin{proof}
As in the proof of Lemma \ref{lemelimination}, represent $f$ locally by a morphism
$(f_1, \cdots, f_M): \aff^N \ra \aff^M$, and let $Jf = (\frac{\del
  f_i}{\del x_j})_{1 \le i \le M, 1 \le j \le N}$ be the Jacobian
  matrix of this morphism.  The ramification locus $\ram(f)$ is the
  locus of all $P\in U$ where the restriction of $Jf$ to $T_PU$ has
  rank less than $n$.  Therefore, $\ram(f)$ is the intersection of $U$
  and the vanishing locus of minors of a certain matrix,
and its type
  $(N_\rram, r_\rram, d_\rram)$ depends only on the type of $X$ and on
  the degree of $f$.

Lemma \ref{lemelimination}, applied to the cover $\ram(f) \ra
\branch(f)$, shows that $\branch(f)$ is of type 
$(N_\bbranch, r_\bbranch, d_\bbranch)$ for constants which depend only on
$(N,r,d,n)$.  
\end{proof}

Let $g:Z \ra Y$ be an \'etale cover of affine varieties.  
Say that $g$  is tamely ramified at the boundary if there are
compactifications $Z\inject \bar Z$ and $Y\inject \bar Y$, with $\bar
Z$ and $\bar Y$ projective, and a morphism $\bar g:\bar Z \ra
\bar Y$ compatible with $g$ which is at worst tamely ramified.  (This
notion is independent of the choice of compactification; moreover, in
the present context, the cover $g:Z \ra Y$ arises as an open subcover
of a known cover of projective varieties.)

\begin{lemma}
\label{lemgaloisbetti}
Given data $(N,r,d)$ there exists an effective constant $\sigma$ such that the
following holds.  Let $Y$ be a smooth geometrically irreducible affine variety of type $(N,r,d)$
over a field $k$, and let $g:Z \ra Y$ be an \'etale Galois cover
tamely ramified at the boundary.  If $\ell$ is any rational prime
invertible in $k$, then the sum of the compact
$\ell$-adic Betti numbers of $Z$ satisfies
\[
\sigma_c(Z,\rat_\ell) := \sum_{i=0}^{\dim Z} \dim H^i_c(Z\cross \bar k, \rat_\ell)
\le \deg(g) \sigma.
\]
\end{lemma}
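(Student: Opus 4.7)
The plan is to express $H^{*}_c(Z,\rat_\ell)$ as the cohomology of $Y$ with coefficients in a tame lisse sheaf of rank $n := \deg(g)$, and then invoke a uniform Katz-style Betti bound for tame sheaves on varieties of bounded complexity. Since $g$ is finite étale, the pushforward $\mathcal{F} := g_* \rat_\ell$ is lisse of rank $n$ on $Y$ and the higher direct images $R^{i} g_* \rat_\ell$ vanish for $i > 0$. The Leray spectral sequence for $g$ therefore degenerates to give an isomorphism
\[
H^{i}_c(Z \cross \bar k, \rat_\ell) \iso H^{i}_c(Y \cross \bar k, \mathcal{F})
\]
for every $i$, reducing the problem to showing $\sum_i \dim H^{i}_c(Y \cross \bar k, \mathcal{F}) \le \sigma \cdot n$ for some $\sigma = \sigma(N,r,d)$.

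The hypothesis that $g$ is tamely ramified at the boundary then guarantees, after passing to a suitable compactification $\bar Y$ of $Y$, that $\mathcal{F}$ extends with tame monodromy along each irreducible component of $\bar Y \setminus Y$. One can arrange $\bar Y$ so that its boundary is a simple normal crossings divisor whose complexity is bounded in terms of $(N,r,d)$; in characteristic zero this uses Hironaka, and in positive characteristic a de Jong alteration suffices (with effective control on the invariants involved). At this stage I would invoke the uniform Betti estimate: there is a constant $\sigma_0 = \sigma_0(N,r,d)$ such that for every tame lisse $\rat_\ell$-sheaf $\mathcal{G}$ of rank $m$ on a smooth affine $Y$ of type $(N,r,d)$, one has $\sum_i \dim H^{i}_c(Y \cross \bar k, \mathcal{G}) \le \sigma_0 \cdot m$. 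Such bounds are in the spirit of Katz's work on sums of Betti numbers, reducing via Lefschetz pencil or Bertini fibrations to the case of curves, where tameness lets the Euler--Poincaré formula give an estimate linear in the rank. Applying the bound with $m = n$ completes the argument.

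The main obstacle will be to pin down the correct twisted form of the uniform Betti bound and to verify that the paper's notion of \emph{tamely ramified at the boundary} matches the tameness hypothesis required by that estimate (tameness along every component of a normal-crossings boundary after a suitable blow-up or alteration). Tameness is indispensable here: in positive characteristic the Swan conductor can inflate Betti numbers of wildly ramified sheaves well beyond any bound linear in the rank, so both the statement and the strategy rely crucially on the tameness assumption built into the lemma.
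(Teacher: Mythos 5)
The paper's own proof of this lemma is a one-line citation to Kowalski \cite[Prop.\ 4.5]{kowalskisieve}, where the relevant effective constant is computed directly; you attempt instead to reconstruct the argument from scratch. Your opening reduction is correct and is almost certainly the same first step as the cited proof: since $g$ is finite \'etale, $g_*\rat_\ell$ is lisse of rank $n=\deg(g)$, $R^{i}g_*\rat_\ell$ vanishes for $i>0$, and Leray identifies $H^{i}_c(Z_{\bar k},\rat_\ell)$ with $H^{i}_c(Y_{\bar k}, g_*\rat_\ell)$, so the problem becomes a uniform Betti bound for a tame lisse sheaf of rank $n$ on a smooth affine $Y$ of type $(N,r,d)$.

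The gap is in the middle step. You assert that one can choose a compactification $\bar Y$ whose boundary is simple normal crossings of complexity bounded in terms of $(N,r,d)$, ``in positive characteristic'' by a de Jong alteration ``with effective control on the invariants.'' This does not work as stated: de Jong's theorem produces a proper, generically finite map from a \emph{different} smooth variety onto $Y$, not a compactification of $Y$ itself, and the degree of the alteration is not a priori bounded by $(N,r,d)$ (it is not effective in the sense this lemma requires). So you cannot conclude that the SNC data, and hence the Betti bound, is uniform in the stated parameters. Fortunately the detour is also unnecessary: the uniform Betti estimates of Katz (``Sums of Betti numbers in arbitrary characteristic'') and, in the form the paper actually uses, Kowalski's Proposition 4.5, are proved by fibering in curves and bounding Euler characteristics fiber by fiber; they require only that $Y$ be cut out in affine space by a bounded number of equations of bounded degree and that the sheaf have bounded rank and tame (or bounded Swan) ramification, with no normal-crossings compactification in sight. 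If you drop the SNC/alteration step and instead invoke such a Betti bound directly, your argument closes and is essentially the one underlying the reference the paper cites.
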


\begin{proof}
This is \cite[Prop.\ 4.5]{kowalskisieve};  the effective constant
of {\em loc. cit.} depends only on the type of $Y$.
\end{proof}

\begin{lemma}
\label{lemunbranched}
Given data $(N,r,d)$ and $n$ there exist effective constants $(N_\unbranch, r_\unbranch,
d_\unbranch)$ such that the following holds.  Let $X$ be a smooth
geometrically irreducible projective variety
of type $(N,r,d)$, and let $f:X \ra Y$ be a finite morphism of degree
$n$ which is generically \'etale.  Then there is an open subvariety
$U\subset Y$, affine of type $(N_\unbranch, r_\unbranch, d_\unbranch)$,
such that $f\rest{f\inv(U)}$ is \'etale.
\end{lemma}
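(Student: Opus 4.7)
My plan is to realize $U$ as a distinguished affine open inside an affine chart of $Y$, obtained by deleting a hypersurface that contains the branch locus.

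First, applying Lemma \ref{lemelimination} to $f$ yields an embedding $Y \inject \proj^{\til N}$ realizing $Y$ as a projective variety of type $(\til N, \til r, \til d)$ with $\til N = 2\dim Y + 1$ and $\til r, \til d$ bounded effectively in $(N,r,d,n)$. After permuting the coordinates of $\proj^{\til N}$ if necessary, I may assume that $V := Y \cap \aff^{\til N}$ is nonempty, hence (as $Y$ is irreducible) dense in $Y$; it is then cut out in $\aff^{\til N}$ by at most $\til r$ polynomials of degree at most $\til d$.

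Next, I re-run the proof of Lemma \ref{lembranch} with respect to this fixed embedding. On the affine chart $W := X \cap \aff^N$ the morphism $f$ is represented by a polynomial map $(f_1,\ldots,f_{\til N}): W \ra \aff^{\til N}$, and $\ram(f)\cap W$ is cut out in $W$ by the appropriate minors of the Jacobian of $(f_1,\ldots,f_{\til N})$. Applying classical elimination theory to the restricted morphism $\ram(f)\cap W \ra \aff^{\til N}$ yields polynomials $h_1, \ldots, h_s \in k[\aff^{\til N}]$ whose common vanishing locus in $\aff^{\til N}$ is $\branch(f) \cap \aff^{\til N}$, with $s$ and $\deg h_i$ bounded effectively in $(N,r,d,n)$.

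Since $f$ is generically \'etale, $\branch(f) \subsetneq Y$; because $V$ is dense in $Y$, $\branch(f) \cap V \subsetneq V$, so at least one $h_i$, which I relabel $F$, does not vanish identically on $V$. Set $U := V \setminus \{F=0\}$, the distinguished affine open $\spec(\calo(V)[1/F]) \subset V$. Adjoining a new coordinate $y$ playing the role of $1/F$ realizes $U$ as a closed subvariety of $\aff^{\til N + 1}$ cut out by the equations for $V$ together with $yF - 1 = 0$; consequently $U$ has type $(\til N + 1, \til r + 1, \max(\til d, \deg F + 1))$, all bounded effectively in $(N,r,d,n)$. Since $\branch(f)\cap V \subseteq \{F=0\}$, the restriction $f\rest{f\inv(U)}$ is \'etale, as required.

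The main obstacle will be the compatibility step: running the elimination in Lemma \ref{lembranch} so that its output is a system of equations for $\branch(f)$ inside the specific embedding of $Y$ provided by Lemma \ref{lemelimination}, rather than inside an independently chosen embedding of $\branch(f)$. Once this bookkeeping is in place, the effective elimination bounds together with the localization at $F$ deliver the required type for $U$.
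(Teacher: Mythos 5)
Your proposal follows essentially the same strategy as the paper's proof: compute equations of bounded degree for the branch locus (via Lemmas \ref{lemelimination} and \ref{lembranch}), then delete a hypersurface of bounded degree containing it. The main differences are bookkeeping choices. The paper takes $U$ to be the complement in all of $Y$ of the vanishing of the \emph{product} $\prod G_i$ of the homogeneous branch-locus forms (so $U$ is the complement of an ample divisor and is affine of bounded type via a Veronese-type re-embedding), whereas you first pass to an affine chart $V = Y\cap\aff^{\til N}$ and then delete a single, well-chosen $F = h_i$, using the Rabinowitsch trick to exhibit $U$ as a closed subvariety of $\aff^{\til N + 1}$. Both yield an affine open of effectively bounded type, so the choice is immaterial. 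You are right that there is a genuine compatibility point to manage — one needs the branch-locus equations to live in the same embedding of $Y$ produced by Lemma \ref{lemelimination}, not in some independently chosen embedding of $\branch(f)$ — and making that explicit by re-running the elimination inside the fixed chart is a reasonable and correct way to handle something the paper leaves implicit. One thing you (and, to be fair, the paper as well) elide: running the Jacobian-and-elimination computation on a single affine chart $W$ of $X$ only sees the closure of $f(\ram(f)\cap W)$, so in principle one must cover $X$ by finitely many standard charts and intersect the resulting loci to be sure no component of $\branch(f)$ is missed; this is routine and preserves the effective bounds, but deserves a sentence.
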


\begin{proof}
By Lemmas \ref{lemelimination} and \ref{lembranch}, there is a collection of
homogeneous forms $\st{G_1, \cdots, G_s}$ of bounded degree on $Y$ such that $f$ is
unramified outside the vanishing locus of these forms.  Let $U$ be the
complement of the vanishing locus of the product $\prod G_i$.  Then
$f\rest{f\inv(U)}$ is \'etale, and $U$ has known type $(N_\unbranch,
r_\unbranch, d_\unbranch)$.
\end{proof}

\begin{theorem}\
\label{thnrdc}
Given data $(N,r,d)$ and $n$ there exists an effective  constant $C$ so that the
following holds.  Let $X/\ff$ be a smooth geometrically irreducible
projective variety of type $(N,r,d)$.  Then $(X/\ff,C,n)$ satisfies
\eqref{thecond}.
\end{theorem}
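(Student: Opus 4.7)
The plan is to follow the Weil--Deligne strategy of \cite[Prop.~5.6]{gtz} and the sketch in the introduction, using the preceding lemmas to make the implicit constants uniform in $(N,r,d,n)$.  Since Lenstra's theorem gives (c)$\Rightarrow$(a) and (c)$\Rightarrow$(b), it suffices to establish the converses.  First I would assemble uniformly bounded auxiliary data: Lemma \ref{lemelimination} bounds the type of $Y$; Lemma \ref{lembranch} bounds the types of $\ram(f)$ and $\branch(f)$; and Lemma \ref{lemunbranched} produces an affine open $U \subseteq Y$ of bounded type over which $f$ is \'etale.  Writing $V := f\inv(U)$ and $\til f:\til X \ra Y$ for the Galois closure, the restriction $\til V := \til X \times_Y U \ra U$ is finite \'etale Galois of degree dividing $n!$ (and tamely ramified at the boundary, since $f$ is), so Lemma \ref{lemgaloisbetti} bounds its compact $\ell$-adic Betti numbers by a constant depending only on $(N,r,d,n)$.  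Every geometrically irreducible component of $X \times_Y X$ meeting $V \times_U V$ has dimension $\dim X$ and its restriction to $V \times_U V$ is a finite quotient of $\til V$; since compactly supported cohomology of such a quotient injects into $H^*_c(\til V)$, and since the boundary lies inside $(\ram(f) \times X) \cup (X \times \ram(f))$ of type controlled by Lemma \ref{lembranch}, an excision argument combined with Deligne's bound yields uniform constants $A,B$ for which
\[
\bigl||Z(\ff_q)| - q^{\dim X}\bigr| \le A\, q^{\dim X - 1/2}, \qquad |\ram(f)(\ff_q)| \le B\, q^{\dim X - 1},
\]
and similarly for $|X(\ff_q)|$ and $|Y(\ff_q)|$, for every finite extension $\ff_q/\ff$ and every such component $Z$.

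To prove (a)$\Rightarrow$(c), suppose $f(\ff_q)$ is injective but $f_{\ff_q}$ is not exceptional.  Then some geometrically irreducible component $Z \ne \Delta$ of $X \times_Y X$ is defined over $\ff_q$, and injectivity forces every $(x_1,x_2) \in Z(\ff_q)$ to satisfy $x_1 = x_2$.  Since $Z$ and $\Delta$ are distinct components, the first projection identifies $Z \cap \Delta$ with a subset of $\ram(f)$, so $|Z(\ff_q)| \le |\ram(f)(\ff_q)|$.  The estimates above then force $q \le C$ for a uniform constant $C$ depending only on $A$ and $B$.

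To prove (b)$\Rightarrow$(c), suppose $f(\ff_q)$ is surjective but $f_{\ff_q}$ is not exceptional, so such a $Z$ again exists.  Let $N_k := |\{y \in Y(\ff_q) : |f\inv(y)(\ff_q)| = k\}|$; then $\sum_k N_k = |Y(\ff_q)|$, $\sum_k k N_k = |X(\ff_q)|$, and $\sum_k k^2 N_k = |X \times_Y X(\ff_q)| \ge |\Delta(\ff_q)| + |Z(\ff_q)| - |\Delta \cap Z(\ff_q)| \ge 2 q^{\dim X} - O(q^{\dim X - 1/2})$.  Surjectivity gives $N_0 = 0$, so
\[
\sum_{k \ge 2} k(k-1) N_k = |X \times_Y X(\ff_q)| - |X(\ff_q)| \ge q^{\dim X} - O(q^{\dim X - 1/2}).
\]
On the other hand, the inequality $k(k-1) \le n(k-1)$ for $1 \le k \le n$ together with $N_0 = 0$ gives
\[
\sum_{k \ge 2} k(k-1) N_k \le n \sum_{k \ge 2} (k-1) N_k = n\bigl(|X(\ff_q)| - |Y(\ff_q)|\bigr) = n \cdot O(q^{\dim X - 1/2}),
\]
and these contradict one another once $q$ exceeds a uniform constant $C'$.

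The main obstacle is the cohomological bookkeeping underlying the uniform Deligne estimate for $|Z(\ff_q)|$: a geometrically irreducible component $Z$ of $X \times_Y X$ is not itself a Galois cover in a form directly amenable to Lemma \ref{lemgaloisbetti}, so one must apply that lemma to $\til V$, transfer the bound to $Z \cap (V \times_U V)$ via the identification of components of $V \times_U V$ with finite quotients of $\til V$, and then pass from this open part to the full projective component $Z$ using the long exact sequence of a pair together with the boundary type control from Lemma \ref{lembranch}.
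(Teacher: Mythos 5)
Your proposal is correct in outline but follows a \emph{genuinely different route} from the paper. The paper reduces exceptionality to a group-theoretic fixed-point criterion (each $a\in\calb$ must fix a point of the coset space $\cals$, via \cite[Lemmas 4.2, 4.3]{gtz}), and then for each $a$ produces a rational point $P_a \in U(\ff_q)$ over which the decomposition group is $\ang{a}$ by showing a certain \emph{twist} $\til V^{\til a}$ of the smooth \'etale Galois cover $\til V$ has an $\ff_q$-point once $q>C$; this works because twists of $\til V$ have the same compact Betti numbers, so a single application of Lemma \ref{lemgaloisbetti} plus Lefschetz/Deligne handles all of them uniformly. You instead implement the heuristic sketched in the paper's introduction: count directly on geometrically irreducible components $Z$ of $X\times_Y X$, using the containment $Z(\ff_q)\subset\ram(f)(\ff_q)$ for (a)$\Rightarrow$(c) and a second-moment identity $\sum_k k(k-1)N_k = |X\times_Y X(\ff_q)|-|X(\ff_q)|$ for (b)$\Rightarrow$(c). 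The surjectivity argument in particular is a clean elementary alternative to the paper's group-theoretic one, and is not in the paper at all.

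Two remarks on what each route costs. Your approach needs a uniform Betti-number bound for the (possibly singular, possibly non-Galois) component $Z$ of $X\times_Y X$, and you correctly identify this as the main obstacle; your sketch — write $Z^o = Z\cap(V\times_U V)$ as $\til V/H'$ so that $\sigma_c(Z^o)\le\sigma_c(\til V)$, then use the excision sequence and control of the boundary $Z\setminus Z^o$ — is plausible but requires a bound on $\sigma_c(Z\setminus Z^o)$ that does not come from Lemma \ref{lemgaloisbetti} (the boundary is not an \'etale cover); you would need to invoke a general Katz-type bound for constructible sets of bounded complexity, or else observe that $Z$, being a top-dimensional component of a variety of bounded type and bounded degree, itself has bounded type. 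You should also record explicitly (it is needed to get the $q^{\dim X}$ main term from $Z$) that since $Y$ is smooth its diagonal is regularly embedded of codimension $\dim Y$, so \emph{every} component of $X\times_Y X$ has dimension exactly $\dim X$; without this, a low-dimensional nondiagonal component defined over $\ff_q$ would break the counting. The paper's twisted-variety route avoids both of these issues because $\til V^{\til a}$ is smooth and affine by construction and is literally a twist of a cover to which Lemma \ref{lemgaloisbetti} applies directly. On the other hand, your route does not require the reader to unwind the construction of $\til V^{\til a}$ or the Burnside-style equivalences between ``at least one'', ``at most one'', and ``exactly one'' fixed point in \cite[Lemma 4.3]{gtz}.
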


\begin{proof}
We adopt the notation, ideas and results of \cite{gtz}.  
Say that a variety is of known type if its type can be effectively
bounded purely in terms of the data  $(N,r,d,n)$.  Let $Y/\ff$ be a smooth
geometrically irreducible projective variety, and let $f:X \ra Y$ be a
finite, tamely ramified, generically \'etale morphism of degree $n$.
By Lemma
\ref{lemelimination}, $Y$ is of known type.

Let $\til X \ra Y$
be the Galois closure of $X \ra Y$. Then $\til X$ is a variety over some
finite extension $\til\ff$ of $\ff$, and there is a finite map of
schemes $\til f:\til X \ra Y$ of degree $\til n \le n!$ which is
tamely ramified and generically
\'etale.
Let $A = \aut(\til X/Y)$, and let $G = \aut( \til X/
Y\cross 
\til\ff_q)$ be the geometric part of the extension; then $A/G\iso
\gal(\til\ff/\ff)$.  Let $H = \aut(\til X/X)$, let $\cals$ be the
set of left cosets of $H$ in $A$, and let $\calb = \st{ a \in A :
  \ang{aG} = A/G}$. By
\cite[Lemmas 4.2 and 4.3]{gtz}, to show that $f$ is exceptional it suffices to
show that each $a\in \calb$ has at least one fixed point in $S$.

Using Lemma \ref{lemunbranched}, we may construct an affine subvariety 
$U\subset Y$ of known type such that $f\rest{f\inv(U)}$ is
unramified.  Let $\til V = \til X \cross U$; then $\til f\rest{\til
  V}: \til V \ra U$ is an \'etale Galois cover of affine varieties.
Let $\ell$ be any rational prime invertible in $\ff$.
Lemma \ref{lemgaloisbetti} gives an effective upper bound for the sum of the
compact Betti numbers $\sigma_c(\til V,\rat_\ell)$.

Let $\til W$ be any twist of $\til V$.  Then $\sigma_c(\til W,\rat_\ell) =
\sigma_c(\til V,\rat_\ell)$.  By the Lefschetz trace formula  \cite[II.3.2]{sga4h}
and Deligne's bound
for the weights of Frobenius on the compact cohomology groups of a
smooth variety \cite[Thm. 1]{deligneweil2}, we find that there is an effective constant $C$,
depending only on $(N,r,d,n)$, such that if $\ff_q/\ff$ is a finite field,
and if $q > C$, then $\til W(\ff_q)$ is nonempty.

The result now follows from the techniques of \cite{gtz}.    Fix any
$a \in \calb$.   Restricting the construction of
\cite[paragraph after 3.2]{gtz} to $\til V$, construct a certain twist
$\til V^{\til a}$ of $\til V$.    If $q >C$, then there exists some
$\til Q_a \in \til V^{\til a}(\ff_q)$.  Let $P_a = f(\til Q_a)$.  Combining
the defining property of $\til V^{\til a}$ with the fact that $\til
f\rest{\til V}$ is unramified, we have $\til Q_a/P_a$ is unramified, with
decomposition group $\ang a$.    Moreover, for this point $P_a$, the
number of points of $X(\ff_q)$ lying over $P_a$ is exactly the number of
points of $\cals$ fixed by $a$ \cite[Lemma 3.2]{gtz}.  Henceforth, suppose $q>C$.

If $f$ is surjective on $\ff_q$ points, then for each $a\in \calb$
there is at least one point of $X(\ff_q)$ lying over 
$P_a$, so that at least one point of $\cals$ is fixed by $a$. Then $f$
is exceptional \cite[Lemma 4.3]{gtz}.

Similarly, if $f$ is injective on $\ff_q$ points, then for each $a\in \calb$
there is at most one point of $\cals$ fixed by
$a$, and $f$ is again exceptional.

Finally, Lenstra's theorem \cite[Prop.\ 4.4]{gtz} shows that if $f$ is
exceptional, then it is in fact bijective on $\ff_q$-points.
\end{proof}

\begin{remark}
\label{remgtz}
Even though this statement depends on a polarization of the variety
$X$, it is still much more uniform than the best result previously
known.   For comparison's sake, note that \cite[Prop.\ 5.6]{gtz}
states that given $f:X \ra Y$ a finite separable map between normal
varieties over $\ff$, if $f(\ff_{q^m})$ is injective or surjective for
infinitely many $m$, then $f$ is exceptional.   As noted there, this
implies the existence of a 
constant $C$, depending on $f:X \ra Y$, such that $(f:X \ra Y, C,
\ff)$ satisfies \tfae.  Indeed, the proof of {\em loc. cit.} shows the
existence of a number $M$ such that if $m \ge M$, then the
surjectivity or injectivity of $f(\ff_{q^m})$ implies the
exceptionality of $f_{\ff_{q^m}}$.
\end{remark}

\begin{remark}
\label{remmoduli}
If $X$ is a member of a known family then Theorem
\ref{thnrdc} provides a uniform bound for $C$, in the following sense.

Suppose $S$ is noetherian and $\calx \ra S$ is has geometric fibers
which are smooth, projective and irreducible.
Then there
exist $(N_S, r_S, d_S)$ such that for any point $t \in S(\ff)$, the
fiber $\calx_t$ has type $(N_S, r_S, d_S)$.  Consequently, for any $n$
there exists an effective constant $C = C(S,n)$ such that if $t \in
S(\ff)$, then $(\calx_t/\ff, C, n)$ satisfies \eqref{thecond}.

Natural examples of such families $\calx \ra S$ are the tautological
families over (an open subscheme of) the Hilbert scheme of schemes of $\proj^N$ with
specified Hilbert polynomial; the moduli space of principally
polarized abelian varieties of given dimension; and the moduli space
of K3 surfaces with polarization of specified degree.  In fact, many
moduli spaces are constructed by taking the GIT quotient of an open
subscheme $S$ of a Hilbert scheme.  While the difficulties typically center
around the construction of the quotient space, the techniques of the present
paper apply directly to $S$.
\end{remark}

\section{Surfaces}

If the varieties $X$ and $Y$ are curves, then \cite[Thm.\ 4.7]{gtz} gives an
explicit bound for the constant $C$ of \ref{thnrdc} which depends only
on $n = \deg(f)$ and the genus of $X$.  In this section, we 
show that if
$X$ is a surface of general type, then there
is a constant $C$ which depends only on the Hodge numbers of $X$ and
on $n$ such that $(X/\ff,C,n)$ satisfies \eqref{thecond}.  If $X$
lifts to $W(\ff)/p^2$, we will deduce that $C$ need only depend on
$\deg(f)$ and on the $\ell$-adic Betti numbers of $X$. 

Let $k$ be any field.  If $X/k$ is a projective surface, we denote its Hodge numbers by 
$h^{ij}(X) = \dim H^j( X, \Omega^i_X)$.

\begin{lemma}
\label{lemhodge}
If $X/k$ is a smooth projective surface of general type with specified Hodge numbers
$h^{ij}(X) = h^{ij}$, then there is a bound for the type
of $X$ which depends only on $h^{ij}$.
\end{lemma}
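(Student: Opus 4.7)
The plan is to reduce the lemma to a boundedness statement for surfaces of general type and then invoke Remark~\ref{remmoduli}. Two inputs are needed: first, that $K_X^2$ and $\chi(\calo_X)$ are bounded in terms of the Hodge numbers; second, that smooth surfaces of general type with bounded $K^2$ and $\chi(\calo)$ form a bounded family.

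For the first input, $\chi(\calo_X) = 1 - h^{01} + h^{02}$ is immediate from the definition. Noether's formula $12\chi(\calo_X) = K_X^2 + c_2(X)$ then expresses $K_X^2$ in terms of $\chi(\calo_X)$ and the $\ell$-adic Euler characteristic $c_2(X) = 2 - 2b_1 + b_2$; the Betti numbers satisfy $b_i \le \sum_{p+q=i} h^{pq}$, by Hodge symmetry in characteristic zero and by the Fr\"olicher spectral sequence combined with comparison between de Rham, crystalline, and $\ell$-adic cohomology in positive characteristic. Hence $|K_X^2|$ is bounded in terms of the $h^{ij}$.

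For the second input, I would invoke the boundedness of the moduli of smooth surfaces of general type with fixed $K^2$ and $\chi(\calo)$ (Gieseker over $\mathbb{C}$, extended to positive characteristic by Koll\'ar and others). Concretely, the canonical model $X_{\mathrm{can}}$ embeds uniformly into a fixed $\proj^N$ via $|5K|$ (Bombieri's theorem, extended to characteristic $p$ by Ekedahl), and $X$ is recovered from $X_{\mathrm{can}}$ by the minimal resolution of a bounded number of rational double points followed by a bounded number of point blow-ups, the count being controlled by $\rho(X) \le b_2(X)$. All such $X$ therefore fit into a family $\calx \ra S$ with $S$ of finite type, and Remark~\ref{remmoduli} yields the desired uniform bound on the type.

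The main obstacle is the positive-characteristic input: both Bombieri's very-ampleness of $|5K_{X_{\mathrm{can}}}|$ and the Kodaira-type vanishing needed to bound $h^0(5K)$ by Riemann-Roch require substantive modification in characteristic $p$, but adequate substitutes are available in the literature and suffice here, since only existence of the bounds—not their explicit values—is required.
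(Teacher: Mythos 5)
Your proposal follows essentially the same route as the paper: both derive $\chi(\calo_X)$ and $K_X^2$ from the Hodge numbers via Noether's formula, both use very-ampleness of $|5K|$ to embed the canonical model in a fixed $\proj^N$, and both recover $X$ from that model by a bounded number of Du Val resolutions and point blow-ups (the paper cites Mumford for the count of $(-2)$-curves and uses $h^{1,1}$; you use $\rho \le b_2$, which is equivalent). The invocations of Gieseker-type boundedness and Remark~\ref{remmoduli} are extra scaffolding the paper avoids by simply writing down the Hilbert polynomial of the embedded canonical model, but the underlying argument is identical.
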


\begin{proof}
First, we prove the result under the additional assumption that $X$ is
minimal.
Since the Hodge numbers of $X$ are known, in particular one knows
$\chi(X,\calo_X) = \sum_i (-1)^i h^{0,i}(X)$ and 
$K_X^2 = 12 \chi(X.\calo_X) - \sum(-1)^{i+j}h^{ij}(X)$.  Since $X$ is
of general type, $5K_X$ is very ample.  Let $N = 10 K_X^2 +
\chi(\calo_X) - 1$.  Then $\phi_{5K_X}: X \ra \proj^N$ is birational
onto its image $X_0$.  The embedded surface $X_0 \subset \proj^N$ has
Hilbert polynomial $h_{X_0}(T) = (25/2) (K_X^2) T^2 - (5/2) K_X^2 T +
\chi(\calo_X)$, and thus is of known type.
Moreover, $X_0$ is normal, with at worst Du Val
singularities corresponding to the contraction of $(-2)$ curves on
$X$.  The number $m$ of such curves may be bounded in terms of the Hodge
numbers of $X$ \cite[p. 614]{mumfordcanonical}.  Since $X$ is obtained
from $X_0$ by at most $m$ blowups, $X$ has known type.

Finally, we prove the result for arbitrary smooth projective surfaces
of general type.
If $X$ is such a surface, and if $\pi:X \ra \bar X$ is the
blowing-down of a $(-1)$-curve, then $h^{1,1}(X) = h^{1,1}(\bar X)+1$,
while $h^{ij}(X) = h^{ij}(\bar X)$ for all other $(i,j)$.  Therefore,
$X$ differs from its minimal model $X_{\min}$ 
by at most $h^{1,1}(X)$ blow-ups, and the Hodge numbers of $X_{\min}$ are
known.  Since a variety obtained by a bounded number of blowups from a
variety of known type is again of known type, $X$ has known type.
\end{proof}

\begin{corollary}
\label{corsurf}
Given nonnegative integers $b_1$, $b_2$ and $b_3$ and a natural number
$n$, there exists an
effective constant $C$ so that the following holds.  Let $X$ be a
smooth, projective, geometrically irreducible surface over a finite
field $\ff$.  Suppose
that $X$ is of general type and admits a flat lifting to $W(\ff)/p^2$, and that $\dim
H^i(X,\rat_\ell) = b_i$ for $1 \le i \le 3$.  Then $(X/\ff,C,n)$ satisfies
\eqref{thecond}.
\end{corollary}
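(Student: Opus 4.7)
The plan is to deduce this from Theorem \ref{thnrdc} by way of Lemma \ref{lemhodge}: since Lemma \ref{lemhodge} effectively bounds the type of a smooth projective surface of general type in terms of its Hodge numbers, it suffices to bound the Hodge numbers $h^{ij}(X)$ effectively in terms of $b_1$, $b_2$, $b_3$. The flat lift of $X$ to $W(\ff)/p^2$, together with $\dim X = 2 < p$ for all $p \ge 3$, invokes the theorem of Deligne--Illusie and causes the Hodge-to-de Rham spectral sequence of $X/\ff$ to degenerate at $E_1$; the characteristic $p = 2$ case would require a separate, routine treatment. Degeneration gives
\[
\dim_\ff H^n_{dR}(X/\ff) \;=\; \sum_{i+j=n} h^{ij}(X).
\]

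Next, I would relate these de Rham dimensions to the $\ell$-adic Betti numbers. The Berthelot--Ogus universal coefficient sequence together with the Katz--Messing identification of $\mathrm{rank}_W H^n_{crys}(X/W)$ with $b_n$ expresses $\dim H^n_{dR}(X/\ff) - b_n$ in terms of the $p$-torsion of crystalline cohomology in degrees $n$ and $n+1$. For a surface, Serre duality $h^{ij} = h^{2-i,2-j}$ together with the equality of Euler characteristics $\sum (-1)^n \dim H^n_{dR}(X/\ff) = 2 - 2b_1 + b_2$ reduces the remaining work to a bounded linear-algebraic problem in a few Hodge numbers, pinning each $h^{ij}(X)$ down to an effective function of $(b_1, b_2)$. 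Once the Hodge numbers are so bounded, Lemma \ref{lemhodge} bounds the type of $X$ in terms of $(b_1, b_2, b_3)$, and Theorem \ref{thnrdc} then furnishes the constant $C = C(b_1, b_2, b_3, n)$.

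The main obstacle is the second step: in positive characteristic $\dim H^n_{dR}(X/\ff)$ can genuinely exceed $b_n$, and controlling the crystalline $p$-torsion responsible is the substantive content beyond Deligne--Illusie. For surfaces, Serre duality and the Euler-characteristic identity make this manageable; the analogous step in higher dimension is exactly what prevents an immediate generalization of Corollary \ref{corsurf}.
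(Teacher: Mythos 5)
Your outline matches the paper's: invoke Deligne--Illusie to degenerate the Hodge-to-de Rham spectral sequence, convert the Betti numbers $b_i$ into bounds on the Hodge numbers $h^{ij}(X)$, and then appeal to Lemma~\ref{lemhodge} and Theorem~\ref{thnrdc}. The paper handles the middle step in one sentence, asserting that degeneration plus ``the comparison theorem between \'etale and deRham cohomology'' yields $\sum_{i+j=n} h^{ij}(X) = b_n$. You rightly observe that this is not automatic: in characteristic $p$ the de Rham dimensions can strictly exceed the $\ell$-adic Betti numbers because of $p$-torsion in crystalline cohomology, and you try to close that gap. Flagging this subtlety, which the paper elides, is a genuine contribution.

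However, your proposed resolution does not close the gap. Write $T_n$ for the torsion submodule of $H^n_{\mathrm{crys}}(X/W)$ and $t_n = \dim_{\ff} T_n/pT_n$. Berthelot--Ogus gives $\dim H^n_{dR}(X/\ff) = b_n + t_n + t_{n+1}$. For a smooth projective surface, crystalline Poincar\'e duality forces $T_0 = T_1 = T_4 = T_5 = 0$ and $t_2 = t_3$. Plugging in: $d_1 = b_1 + t_2$, $d_2 = b_2 + 2t_2$, $d_3 = b_3 + t_2$. The Euler characteristic identity you cite, $\sum(-1)^n d_n = 2 - 2b_1 + b_2$, is then an \emph{identity in $t_2$}: the torsion cancels from the alternating sum, as it must, since all Weil cohomology theories compute the same Euler characteristic. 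Serre duality for Hodge numbers gives $d_1 = d_3$, which is again automatic. Neither constraint bounds $t_2$, so $h^{01} + h^{10} = b_1 + t_2$ and $2h^{20} + h^{11} = b_2 + 2t_2$ remain unbounded as functions of $(b_1, b_2, b_3)$. Your claim that the ``bounded linear-algebraic problem'' pins each $h^{ij}(X)$ down to an effective function of $(b_1, b_2)$ is therefore incorrect as stated. To complete the argument you would need an independent input bounding (or killing) the torsion in $H^2_{\mathrm{crys}}$ under the $W_2$-lifting hypothesis, and Deligne--Illusie by itself does not supply one. Separately, the $p = 2$ case that you defer as ``routine'' is not covered by Deligne--Illusie at all for $\dim X = 2$, and it is not clear that it is routine.
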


\begin{proof}
  The hypothesis that $X$ lifts modulo $p^2$ implies that the Hodge to
  deRham sequence for $X$ degenerates \cite{deligneillusie}; this,
  combined with the comparison theorem between \'etale and deRham
  cohomology, implies that $\sum_{r=0}^i h^{i,i-r}(X) =
  b_i$ for each $i$.  Therefore, there are only finitely many
  possibilities for the Hodge numbers of $X$, and thus for the type of
  $X$ (Lemma \ref{lemhodge}).  The result now follows from Theorem
  \ref{thnrdc}.
\end{proof}

Similar results are possible for surfaces which are not necessarily of
general type.  Here is one example.

\begin{corollary}
Given a natural number $n$, there exists a constant $C$ so that the
following holds.  Let $\ff$ be a finite field of odd characteristic,
and let $X/\ff$ be an Enriques surface.  Then $(X/\ff,C,n)$ satisfies
\eqref{thecond}.
\end{corollary}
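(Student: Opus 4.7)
The plan is to reduce to Theorem \ref{thnrdc} by exhibiting universal effective constants $(N,r,d)$, depending only on the odd characteristic hypothesis, such that every Enriques surface $X$ over a finite field $\ff$ of odd characteristic has type $(N,r,d)$. The argument of Lemma \ref{lemhodge} does not apply verbatim: $K_X$ is $2$-torsion rather than ample, so no multicanonical embedding is available, and a substitute bounded very ample polarization must be constructed.

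First I would collect the fixed numerical data for any Enriques surface $X$ in odd characteristic: $\chi(\calo_X)=1$, $b_2(X)=10$, $K_X$ is numerically trivial (being $2$-torsion in $\operatorname{Pic}(X)$), and $X$ coincides with its own minimal model since it contains no $(-1)$-curves.  Next, by Cossec--Dolgachev (with the extension to $p\ne 2$ given by Lang and Liedtke), every such $X$ admits an ample divisor $H$ with $H^2$ bounded by a small absolute constant, and a uniformly bounded multiple $mH$ is very ample, for example via a positive-characteristic Reider-type argument.  By Riemann--Roch on $X$, the resulting embedding $\phi_{mH}:X\inject \proj^N$ has $N=h^0(mH)-1$ and Hilbert polynomial $P(T)=\tfrac{1}{2}(mH)^2\,T^2+1$, both ranging over a fixed finite set of possibilities.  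A standard Castelnuovo--Mumford regularity bound on the components of the Hilbert scheme of $\proj^N$ with prescribed Hilbert polynomial then shows that $X$ is cut out by an effectively bounded number of forms of effectively bounded degree, so $X$ has type $(N,r,d)$ for universal $(N,r,d)$.  Alternatively, one may realize these Enriques surfaces as $\ff$-points of a quasi-projective moduli scheme of polarized surfaces, arranged as an open subscheme of a Hilbert scheme, and invoke Remark \ref{remmoduli} directly.

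With $(N,r,d)$ in hand, Theorem \ref{thnrdc} supplies an effective constant $C=C(N,r,d,n)$ such that $(X/\ff,C,n)$ satisfies \eqref{thecond}, completing the reduction.  The main obstacle is the second step: while boundedness of the moduli stack of polarized Enriques surfaces is classical over $\mathbb{C}$, one must verify uniform boundedness of the degree of a very ample polarization, and of the equations cutting out $X$, over all odd residue characteristics simultaneously.  This is precisely where the hypothesis $p\ne 2$ enters essentially, since in characteristic two the trichotomy of classical, $\mu_2$-, and $\alpha_2$-type Enriques surfaces renders the analogous boundedness statement substantially more delicate.
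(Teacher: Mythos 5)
Your argument is plausible and would likely succeed, but it takes a genuinely different and more elaborate route than the paper. The paper's proof is two sentences: it cites Cossec's theorem (\cite[Introduction]{cossec83}) that the \'etale K3 double cover of $X_{\bar\ff}$ is the intersection of three quadrics in $\proj^5$, concludes immediately that $X_{\bar\ff}$ is of known type, and then observes that the bound in Theorem \ref{thnrdc} only depends on the \emph{geometric} type (all the varieties $Z$ appearing in that proof are bounded after base change to $\bar\ff$, and the Betti-number estimate of Lemma \ref{lemgaloisbetti} is a geometric statement). Your route instead tries to bound the type of $X$ itself by producing a uniformly bounded very ample polarization on the Enriques surface and then invoking Castelnuovo--Mumford regularity or Hilbert-scheme boundedness (the content of Remark \ref{remmoduli}). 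Both are reductions to Theorem \ref{thnrdc}, but the ingredients differ substantially.

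A few remarks on where your version requires more care than you give it credit for. You correctly flag that the existence of a uniformly bounded very ample $H$ on $X$ is ``the main obstacle,'' but the Reider-type mechanism you gesture at is delicate in positive characteristic (Bogomolov instability can fail), and the relevant statements for Enriques surfaces in all odd characteristics are scattered across Cossec, Lang, and Liedtke rather than being a single off-the-shelf citation; the paper's direct appeal to the three-quadrics model sidesteps all of this. You also implicitly assume the polarization can be taken $\ff$-rational. A priori a low-degree ample class may only live in $\operatorname{NS}(X_{\bar\ff})$; one can repair this by summing a Galois orbit (of uniformly bounded size since $\operatorname{NS}$ has rank $10$), or, as the paper does, simply note that only the geometric type is needed. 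On the other hand, the paper's inference ``the \'etale double cover is of type $(5,3,2)$, hence $X_{\bar\ff}$ is of known type'' is itself a small step left to the reader (a quotient of a bounded variety by $\mathbb{Z}/2$ is bounded), so your approach has the mild advantage of bounding $X$ directly rather than via its cover. Net: a correct alternative strategy, but materially heavier than, and disjoint in its key citations from, the proof in the paper.
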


\begin{proof}
By \cite[Introduction]{cossec83}, $X_{\bar\ff}$ admits an \'etale
double cover which is the intersection of three quadrics in
$\proj^5$; therefore, $X_{\bar\ff}$ is of known type.  For each scheme
$Z$ which arises in the proof of Theorem \ref{thnrdc}, $Z_{\bar\ff}$
is of known type.  Therefore, the conclusion of \ref{thnrdc} applies
to $X$.
\end{proof}

\section{Examples}

Most known examples of exceptional covers involve curves, especially
the projective line. Higher-dimensional examples tend to involve
special varieties, such as abelian varieties or projective spaces.
While exceptional covers are indeed rare, in this section we show that
there actually exist infinitely many exceptional covers of each
dimension over a given finite field.     See also forthcoming work of
Lenstra, Moulton and Zieve.

Theorem \ref{thnrdc} (like its antecedents in \cite{gtz})
states that if a finite field is sufficiently large
relative to the topology of two varieties, then a cover is injective
on rational points if and only if it is surjective.  We give examples
showing that this fails if the field is not sufficiently large.

Throughout this section, let $\ff = \ff_{q_0}$ be a finite field of
cardinality $q_0$.

\begin{proposition}
\label{propex}
Let $Y/\ff$ be a smooth projective geometrically irreducible variety.  
\begin{alphabetize}
\item There exists an exceptional cover $f_\exc:X_\exc \ra Y$.
\item There exists a cover $f_\surj:X_\surj \ra Y$ such that $X_\surj(\ff) \ra
  Y(\ff)$ is surjective but not injective.
\item Suppose $Y(\ff)$ is nonempty.  There exists a cover $f_\inj:X_\inj \ra
  Y$ such that $X_\inj(\ff)\ra Y_\inj(\ff)$ is injective but not surjective.
\end{alphabetize}
Each of $X_\exc$, $X_\surj$ and $X_\inj$ is a projective smooth
geometrically irreducible variety, and each of $f_\exc$, $f_\surj$ and
$f_\inj$ is a finite, generically \'etale surjective morphism.
\end{proposition}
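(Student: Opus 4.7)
The plan is to realize each of the three covers as a cyclic Kummer cover of $Y$ of the form $t^\ell = g$, choosing the degree $\ell$ and the values of the section $g$ at the $\ff$-rational points of $Y$ to force the required behavior on $\ff$-points.  Fix a very ample line bundle $L$ on $Y$.  For a prime $\ell$ coprime to $p$, a positive integer $m$, and $g \in H^0(Y,L^{\otimes m\ell})$, let $f_g: X_g \ra Y$ denote the associated cyclic $\mu_\ell$-cover, a finite generically \'etale morphism of degree $\ell$.  Standard facts give: $X_g$ is smooth iff the divisor $V(g)\subset Y$ is smooth; $X_g$ is geometrically irreducible iff $g$ is not an $\ell$-th power in $\bar\ff(Y)^\times$; and the geometric components of $X_g\cross_Y X_g$ are the graphs of $t\mapsto \zeta t$ for $\zeta\in\mu_\ell$, with the graph associated to $\zeta$ defined over $\ff$ iff $\zeta\in\ff$.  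Over $y\in Y(\ff)$, the fiber $f_g\inv(y)$ has $\ell$ rational points if $g(y)$ is a nonzero $\ell$-th power in $L^{\otimes m\ell}\rest{y}$, no rational points if $g(y)$ is a nonzero non-$\ell$-th power, and exactly one (ramified) rational point if $g(y)=0$.

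For (a), I would take any prime $\ell$ coprime to $p(q_0-1)$, so that $\mu_\ell(\ff)=\st{1}$; then the unique $\ff$-rational component of $X_g\cross_Y X_g$ is the diagonal and $f_g$ is exceptional.  For $m$ sufficiently large, Poonen's Bertini theorem over finite fields supplies sections $g\in H^0(Y,L^{\otimes m\ell})$ with $V(g)$ smooth, and the locus of $\ell$-th powers is a proper Zariski-closed subset, so a suitable $g$ exists giving a smooth projective geometrically irreducible $X_g$.

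For (b) and (c), I would take a prime $\ell$ dividing $q_0-1$, so that $\mu_\ell\subset\ff^\times$ and $(\ff^\times)^\ell$ is a proper subgroup of $\ff^\times$.  For $m$ sufficiently large, the restriction map
\[
H^0(Y, L^{\otimes m\ell}) \ra \bigoplus_{y\in Y(\ff)} L^{\otimes m\ell}\rest{y}
\]
is surjective, and I may prescribe the values of $g$ on the finite set $Y(\ff)$ freely.  In (b), I prescribe each $g(y)$ to be a nonzero $\ell$-th power; then every fiber of $f_g$ above $Y(\ff)$ has exactly $\ell\ge 2$ rational points, producing a surjective but non-injective map on $\ff$-points.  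In (c), I prescribe each $g(y)$ to be a nonzero non-$\ell$-th power, making $X_g(\ff)$ empty; then $f_g(\ff)$ is vacuously injective and, since $Y(\ff)\ne\emptyset$ by hypothesis, not surjective.  In each case the remaining freedom in $g$ is used to ensure $V(g)$ is smooth and $g$ is not an $\ell$-th power, giving a smooth projective geometrically irreducible cover.

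The only obstruction is the case $\ff=\ff_2$ in parts (b) and (c), where $q_0-1=1$ has no prime divisors and the Kummer argument lacks room.  Here I would replace the cyclic cover by an Artin--Schreier cover $t^{q_0}-t=g$: since $t\mapsto t^{q_0}-t$ vanishes identically on $\ff_{q_0}$, prescribing $g$ to vanish (respectively, to be nonzero) at each $y\in Y(\ff)$ produces fibers of size $q_0\ge 2$ (respectively, $0$) over rational points, and the standard desingularization of such covers along the pole divisor of $g$ supplies the required smooth projective geometrically irreducible model.
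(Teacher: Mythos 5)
Your construction takes a genuinely different route from the paper's, and the comparison is instructive. For part (a) both you and the paper build a Kummer cover of degree $\ell$ coprime to $p(q_0-1)$; you work with a global section $g$ of $L^{\otimes m\ell}$ and verify exceptionality directly by identifying the $\ff$-rational components of $X_g\times_Y X_g$, while the paper extracts a local uniformizer, checks bijectivity on $\ff_q$-points for infinitely many $q$, invokes \cite[Prop.\ 5.6]{gtz}, and then passes to the normalization of $Y$ in the function field. Your version is arguably cleaner, since it does not rely on GTZ's characterization of exceptionality via point counts (and sidesteps the delicate claim that the normalization of $Y$ in ${\rm Frac}(A)$ is again smooth). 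For (b) and (c) the two approaches diverge completely: you prescribe the values of a Kummer section $g$ at the finitely many points of $Y(\ff)$ (using $\ell\mid q_0-1$ so that $\ell$-th roots either all lie in $\ff$ or none do), whereas the paper takes the trivial cover $Y\times V\ra Y$ and carves out a space-filling (resp.\ space-avoiding) smooth subvariety via the relative Poonen--Bertini Lemmas \ref{lemspacefill} and \ref{lemspaceavoid}, then applies Stein factorization. Your route buys explicitness (the covers are literally $t^\ell=g$ with prescribed residues), while the paper's buys uniformity in $q_0$.

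That uniformity matters: your Artin--Schreier fallback for $\ff=\ff_2$ is the weak point. The equation $t^{q_0}-t=g$ does not make literal sense for $g$ a section of a positive line bundle (the two sides live in different bundles), so you must take $g$ to be a rational function with poles, and then you assert a ``standard desingularization along the pole divisor.'' For curves that is routine, but for $\dim Y\ge 2$ the compactified Artin--Schreier cover can acquire wild singularities and there is no off-the-shelf resolution (indeed resolution of singularities in characteristic $p$ is open for $\dim\ge 4$). The gap is repairable: rather than poles, one should use the line-bundle-twisted degree-$2$ cover $t^2+ht+g=0$ with $h\in H^0(Y,L^{\otimes n})$, $g\in H^0(Y,L^{\otimes 2n})$, $t$ a section of $L^{\otimes n}$; this already lives inside $\proj(\calo_Y\oplus L^{-n})$, misses the section at infinity, is \'etale where $h\ne 0$, and is smooth for generic $(h,g)$ subject to the prescribed values $h(y)=1$ and $g(y)\in\st{0,1}$ on $Y(\ff_2)$. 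As written, though, your sketch does not establish smoothness and projectivity in the $\ff_2$ case, whereas the paper's Bertini construction handles every $q_0$ without a case split. (One shared minor blemish: part (b) in both your argument and the paper's implicitly requires $Y(\ff)\ne\emptyset$, a hypothesis the statement omits for (b) but includes for (c).)
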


We present constructions after recalling some of the technology for
producing space-filling and space-avoiding varieties developed by
Poonen in \cite{poonenbertini}.

\begin{lemma}
\label{lemspacefill}
Let $f:X \ra Y$ be a surjective morphism of smooth, projective
geometrically irreducible varieties over $\ff$ which is smooth over an
open subset of $Y$ containing $Y(\ff)$.
Then there exists a smooth projective geometrically irreducible
subvariety $Z\subset X$ such that $Z(\ff) = X(\ff)$, and $Z \inject X
\ra Y$ is a surjective morphism which is \'etale over an open subset of
$Y$ containing $Y(\ff)$.
\end{lemma}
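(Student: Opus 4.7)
The plan is to induct on the relative dimension $\delta := \dim X - \dim Y$. When $\delta = 0$, the hypothesis that $f$ is smooth over an open subset of $Y$ containing $Y(\ff)$ means it is \'etale there (smoothness of relative dimension zero), so we simply take $Z = X$.

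For the inductive step with $\delta \geq 1$, fix a projective embedding $X \inject \proj^N$. The aim is to produce a hypersurface $H \subset \proj^N$ of sufficiently large degree such that $Z' := X \cap H$ is smooth and geometrically irreducible, contains $X(\ff)$, and such that $f|_{Z'} \colon Z' \ra Y$ is smooth over an open subset of $Y$ containing $Y(\ff)$.  Granting such a $Z'$, the map $Z' \ra Y$ is surjective because every fiber of $f$ has dimension at least $\delta \geq 1$ and therefore meets any hypersurface in $\proj^N$ nontrivially. Since $\dim Z' = \dim X - 1$ and the hypotheses of the lemma all pass to $f|_{Z'}$, the inductive hypothesis applied to $f|_{Z'}$ then yields the desired $Z \subset Z' \subset X$, with $Z(\ff) = Z'(\ff) = X(\ff)$.

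To construct such an $H$ I would invoke Poonen's Bertini theorem over finite fields \cite{poonenbertini}, combining three kinds of requirements into a single application: (i) $H \cap X$ is smooth; (ii) $H \cap X_y$ is smooth for each of the finitely many smooth fibers $X_y := f^{-1}(y)$ above $y \in Y(\ff)$; and (iii) a 1-jet condition at each $P \in X(\ff)$ demanding $P \in H$, that $T_P H$ be transverse to $T_P X$, and that the intersection $T_P H \cap T_P X$ surject onto $T_{f(P)} Y$ under $df_P$. The 1-jet condition at each $P$ carves out a nonempty Zariski open subset of the space of 1-jets of hyperplanes through $P$, thanks to the smoothness of $f$ at $P$. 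Poonen's theorems (including the Taylor-condition variant) then provide, at sufficiently high degree, a single hypersurface satisfying (i), (ii), and (iii) simultaneously. For this $H$ the scheme $Z' = X \cap H$ is smooth, contains $X(\ff)$, and has $f|_{Z'}$ smooth at each $P \in X(\ff)$; combined with the smoothness of each $H \cap X_y$, this forces $f|_{Z'}$ to be smooth along every fiber above $Y(\ff)$, so the smooth locus of $f|_{Z'}$ pushes down to an open subset of $Y$ containing $Y(\ff)$. Geometric irreducibility of $Z'$ follows from Bertini applied to the geometrically irreducible $X$ of dimension at least two at this stage (or from the geometric-irreducibility refinements of Poonen's framework).

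The main obstacle is the bookkeeping in this simultaneous application of Poonen's Bertini: conditions (ii) and (iii) must be enforced alongside (i) without exhausting the positive density of admissible hypersurfaces. Each condition, however, is of a type sanctioned by Poonen's machinery --- either smoothness of an intersection with a fixed smooth subscheme of $\proj^N$, or prescription of a finite-order Taylor coefficient at a closed point --- so their conjunction remains satisfiable at high enough degree. A subsidiary verification, a routine linear-algebra check, is that the transversality demanded in (iii) is cut out by a nonempty open set in the ambient 1-jet space; this is immediate from the smoothness of $f$ at each $P \in X(\ff)$.
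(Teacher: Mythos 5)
Your proposal follows essentially the same route as the paper: induction on relative dimension, cutting by a hypersurface $H$ found via Poonen's Bertini theorem with Taylor conditions, where the tangency conditions at points of $X(\ff)$ force $P \in H$ and transversality both to $T_P X$ and to $T_{f(P)}Y$, and a fiberwise smoothness condition over $Y(\ff)$ is imposed so that $f|_{Z'}$ remains smooth over $Y(\ff)$. Your surjectivity argument (fibers of positive dimension must meet any hypersurface) is the same dimension count the paper alludes to, and your explicit base case $\delta=0$ is just spelling out what the paper leaves implicit.

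One small item you do not address is the case $Y(\ff) = \emptyset$. In that situation all of your conditions (ii) and (iii) are vacuous, so the Bertini application produces only a smooth irreducible $Z'$ with no control at all on the behavior of $f|_{Z'}$ relative to $Y$; after iterating down to relative dimension zero you could in principle end up with $Z \ra Y$ generically inseparable, violating the ``\'etale over some nonempty open'' conclusion on its natural reading. The paper anticipates this by replacing $S = Y(\ff)$ with a single geometric point $Q \in Y(\bar\ff)$ lying in the smooth locus of $f$ when $Y(\ff)$ is empty, and imposing the fiberwise smoothness condition over that $Q$. You should add the same device, or explicitly restrict to the case $Y(\ff) \neq \emptyset$ (which suffices for the applications in the paper).
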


\begin{proof} 
  Fix an embedding $X \inject \proj^M$.  By induction on $r := \dim X -
  \dim Y$, it suffices to show that there is a hypersurface $H \subset
  \proj^M$ such that $Z := H \cap X$ is smooth, projective and
  geometrically irreducible, $Z(\ff) = X(\ff)$, and $f\rest{Z}:Z \ra
  Y$ is smooth of relative dimension $r-1$ over an open subset of $Y$
  which contains $Y(\ff)$.

  We follow the proof of \cite[Thm.\ 3.3]{poonenbertini}, and describe
  suitable hypersurfaces in terms of local tangency conditions.  In
  this description, all intersections are scheme-theoretic, and the
  empty scheme is smooth of any dimension.  Let $S = Y(\ff)$; if $S$
  is empty, let $S$ consist of a point $Q \in Y(\bar\ff)$ such that
  $f\rest{f\inv(Q)}$ is smooth.
  
  For each $P \in X(\ff)$, choose a codimension one subspace $V_P \subset
  T_{P,\proj^M}$ such that $V_P\cap T_{P,X}$ has codimension one in
  $T_{P,X}$, and the induced map $(df)_P: (V_P\cap T_{P,X}) \ra
  T_{f(P),Y}$ is surjective.  Consider the problem of finding a
  hypersurface $H\subset \proj^M$ such that for each $P\in X(\ff)$, $P\in
  H$ and $T_{P,H} = V_P$; and for each other closed point $P$ of $X$,
  $H$ and $H\cap X$ are smooth of dimensions $M-1$ and $\dim X - 1$,
  respectively, at $P$.  Refine this problem by insisting that for 
  each closed point $P$ of $X$ and each $Q\in S$,
 the intersection $H\cap X \cap f\inv(Q)$ is smooth of
  dimension $r-1$ at $P$.  

  Then \cite[Thm.\ 1.3]{poonenbertini} guarantees the existence of a
  smooth geometrically irreducible hypersurface which satisfies these
  conditions.  Choose such a hypersurface $H$, and let $Z = H\cap X$.
  Then $Z$ is smooth, $Z(\ff) = X(\ff)$, and the morphism $Z \inject X
  \ra Y$ is generically smooth of relative dimension $r-1$, and in
  particular smooth  over each
  $\ff$-rational point of $Y$.   A dimension count shows the morphism is dominant, and thus surjective.
\end{proof}

There is a point-avoiding complement to Lemma \ref{lemspacefill}:

\begin{lemma}
\label{lemspaceavoid}
Let $f:X \ra Y$ be a surjective morphism of smooth, projective
geometrically irreducible varieties over $\ff$ which is smooth over
an open subset of $Y$ containing $Y(\ff)$.
Then there exists a smooth projective geometrically irreducible
subvariety $Z\subset X$ such that $Z(\ff)$ is empty, and $Z \inject X
\ra Y$ is a surjective morphism which is \'etale over an open subset of
$Y$ containing $Y(\ff)$.
\end{lemma}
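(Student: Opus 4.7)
The plan is to imitate the proof of Lemma \ref{lemspacefill}, replacing the tangency conditions at $\ff$-points of $X$ with avoidance conditions. Fix an embedding $X \inject \proj^M$ and set $r := \dim X - \dim Y$. Induct on $r$; it suffices to exhibit a hypersurface $H \subset \proj^M$ such that $Z := H \cap X$ is smooth, projective, and geometrically irreducible, $Z(\ff)$ is empty, and the restriction $f\rest{Z}$ is smooth of relative dimension $r-1$ over an open neighborhood of $Y(\ff)$.

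The required $H$ is specified by local conditions in the framework of Poonen's Bertini theorem \cite[Thm.\ 1.3]{poonenbertini}. Let $S := Y(\ff)$, replaced by a single geometric point over which $f$ is smooth in case $Y(\ff)$ is empty. At each of the finitely many $P \in X(\ff)$, impose the open condition $P \not\in H$. At each closed point $P$ of $X$ with $f(P) \in S$, require $H$ to meet $X \cap f\inv(f(P))$ transversally at $P$, so that the intersection is smooth of dimension $r-1$ at $P$. At each remaining closed point of $X$, require $H$ to meet $X$ transversally at $P$. Poonen's theorem produces a smooth, geometrically irreducible hypersurface of sufficiently large degree satisfying all of these conditions simultaneously: the avoidance condition at each $\ff$-point has positive local density $1 - 1/q$, while the smoothness conditions are exactly those controlled in \cite{poonenbertini}. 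The resulting $Z := H \cap X$ then has all the claimed properties, and $Z \ra Y$ is surjective by a dimension count.

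The main technical point is that the avoidance conditions at the $\ff$-points of $X$ mesh with the density framework of Poonen's paper. This is routine: the locus of degree-$d$ hypersurfaces vanishing at a fixed $\ff$-rational point is a single linear hyperplane in the space of degree-$d$ polynomials, so the union of finitely many such hyperplanes over the finite set $X(\ff)$ still has positive complementary density as $d \to \infty$. Iterating the hypersurface cut $r$ times produces the desired $Z \subset X$, which is \'etale over a neighborhood of $Y(\ff)$ and has no $\ff$-rational points.
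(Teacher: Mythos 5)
Your argument follows the same route as the paper's: the paper literally says the proof is identical to that of Lemma \ref{lemspacefill}, except that the local tangency conditions at each $P \in X(\ff)$ are replaced by the condition $P \notin H$, citing \cite[Cor.\ 3.6]{poonenbertini} for the point-avoidance version of Bertini. Your elaboration on the local density computation (each avoidance condition is the complement of a hyperplane, with local density $1 - 1/q$) is a correct filling-in of what Poonen's Corollary 3.6 handles, and the rest of the proposal matches the paper's inductive hypersurface-cutting scheme.
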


\begin{proof}
This is a relative version of \cite[Cor.\ 3.6]{poonenbertini}.  The
proof is the same as that of Lemma \ref{lemspaceavoid}, except that
in the local
conditions we insist that the hypersurface avoid each point of
$X(\ff)$.
\end{proof}

With these results secured, construction of examples is straight-forward.

\begin{proof}[Proof of Proposition \ref{propex}]

  For part (a), let $Y_0 = \spec B$ be an open affine subvariety of
  $Y$.  Fix a prime $\ell$ relatively prime to $q_0-1$, and let $u\in
  B$ be an element of some system of uniformizing parameters for
  $Y_0$.  Let $A = B[x]/(x^\ell-u)$, and let $X_0 = \spec A$.  Then
  $X_0$ is geometrically irreducible (since $x^\ell-u$ is irreducible
  over $B\otimes \bar\ff$) and smooth (by the Jacobian criterion).
  Let $\ff_q/\ff$ be any finite extension with $\gcd(q-1,\ell) =1$,
  and suppose $Q \in Y_0 (\ff_q)$.  Then there is a unique solution in
  $\ff_q$ to the equation $x^\ell = u(Q)$, and $X_0(\ff_q) \ra
  Y_0(\ff_q)$ is bijective.  Since this is true for arbitrarily large
  $q$, the cover $X_0 \ra Y_0$ is exceptional.  Let $X$ be the
  normalization of $Y$ in ${\rm Frac}(A)$; it, too, is smooth.  The
  geometric characterization of exceptionality makes it clear that
  being exceptional is a birational property, and so $X \ra Y$ is
  exceptional.

For (b), let $V$ be a smooth projective geometrically irreducible
variety such that $\abs{V(\ff)} \ge 2$; concretely, one may take $V =
\proj^1$.  Then the product $Y \cross V$ is again smooth projective
and geometrically irreducible, and the projection $Y\cross V \ra Y$ is
smooth.  Moreover, $(Y\cross V)(\ff) \ra Y(\ff)$ is surjective but not
injective.  By Lemma \ref{lemspacefill}, there is a smooth projective
geometrically irreducible subvariety $\til X_\surj \subset Y\cross V$ such
that $\til X_\surj(\ff) = (Y\cross V)(\ff)$, and $f\rest{\til X_\surj}$ is
\'etale over each element of $Y(\ff)$.  Consider the Stein
factorization $\til X_\surj \sra s X_\surj \sra t Y$ of the projective
morphism $f\rest{\til
  X_\surj}$.  The morphism $t$ is finite and generically \'etale.
Moreover, since $\til X_\surj \ra Y$ is finite over $Y(\ff)$, the
birational morphism $s$ induces a bijection $\til X_\surj(\ff) \sriso
X_\surj(\ff)$.  Therefore, $X_\surj \ra Y$ is finite and generically
\'etale, and $X_\surj(\ff) \ra Y(\ff)$ is surjective but not injective.

The proof of (c) is similar, except that we use Lemma
\ref{lemspaceavoid} to construct a suitable subvariety $\til X_\inj \subset
Y\cross V$ with $\til X_\inj(\ff)$ empty.  Again, the Stein
factorization $\til X_\inj \ra X_\inj \ra Y$ yields a finite cover of
$Y$, and $\til X_\inj(\ff) \sriso X_\inj(\ff)$.
If $Y(\ff)$ is nonempty, then
$X_\inj(\ff) \ra Y(\ff)$ is injective but not surjective.
\end{proof}

\begin{remark}
The construction of the ``space-filling'' variety $X_\surj \subset Y\cross V$ 
in Proposition \ref{propex}.(b) depends on $\ff$; the equality of sets
$X_\surj(\ff) = (Y\cross V)(\ff)$ is only possible if $q_0$ is small
relative to the Betti numbers of $X_\surj$.  Similarly, in
\ref{propex}.(c), the variety $X_\inj$ acquires rational points over
sufficiently large extensions of $\ff$.  While there is no reason to
believe that the constant $C$ in Theorem \ref{thnrdc} is optimal, these examples
indicate that the equivalences in  \tfae\ cannot hold for $\ff$ itself,
but only for sufficiently large extensions.
\end{remark}

\bibliographystyle{abbrv} 
\bibliography{jda}

\end{document}